\newtheorem{thm}{Theorem}[section]
\newtheorem{lem}[thm]{Lemma}
\newtheorem{prop}[thm]{Proposition}
\newtheorem{cor}[thm]{Corollary}
\theoremstyle{definition}
\theoremstyle{remark}
\newtheorem*{rem}{Remark}
\newcommand\abs[1]{\left|#1\right|}
\newcommand\loc{\mathrm{loc}}
\DeclareMathOperator{\Div}{div}
\numberwithin{equation}{section}
\begin{document}

\title[Rotational Symmetry]{Rotational Symmetry of Solutions of Mean Curvature Flows Coming Out of A Double Cone}

\author{Letian Chen}
\address{Department of Mathematics, Johns Hopkins University, 3400 N. Charles Street, Baltimore, MD 21218}
\email{lchen155@jhu.edu}

\date{Jan. 24, 2021}

\begin{abstract}
We show that any smooth solution to the mean curvature flow equations coming out of a rotationally symmetric double cone is also rotationally symmetric.
\end{abstract}

\maketitle

\section{Introduction}
We say a family of properly embedded smooth hypersurface $\{\Sigma_t\}_{t \in I} \subset \mathbb{R}^{n+1}$ is a solution of the mean curvature flow (MCF) equations if
\begin{align*}
    \left(\frac{\partial x}{\partial t}\right)^\perp = H_{\Sigma_t}(x).
\end{align*} 
Here $H_{\Sigma_t}(x)$ denotes the mean curvature vector of $\Sigma_t$ at $x$, and $x^\perp$ is the normal component of $x$. \par
In this article we are interested in solutions of MCF coming out of a rotationally symmetric double cone, by which we mean a (hyper)cone $\mathcal{C} \subset \mathbb{R}^{n+1}$ whose link $\mathcal{L}(\mathcal{C}) = \mathcal{C} \cap \mathbb{S}^{n}$ is a smooth hypersurface of $\mathbb{S}^{n}$ and has two connected components lying in two separate hemispheres. More explicitly, we consider a cone of the form (up to an ambient rotation so that the axis of symmetry is $x_1$-axis)
\begin{equation}
\label{cone}
    x_1^2 =  \begin{cases} m_1(x_2^2 + x_3^2 + \cdots + x_n^2) & x_1 \ge 0 \\  m_2(x_2^2 + x_3^2 + \cdots + x_n^2) & x_1 < 0\end{cases}
\end{equation}
where $m_1,m_2 > 0$ are constants related to the aperture of the one. Solutions coming out of cones arise naturally in the singularity analysis of MCF. In particular the self-expanders, which are special solutions of the MCF satisfying $\Sigma_t = \sqrt{t}\Sigma$ for some hypersurface $\Sigma \subset \mathbb{R}^{n+1}$, are often thought of as models of MCF flowing out of a conical singularity (see for example \cite{AIC}). Self-expanders satisfy the elliptic equation
\begin{align*}
    H_\Sigma(x) = \frac{x^\perp}{2},
\end{align*}
which is the Euler-Lagrange equation of the functional $\int_\Sigma e^{\abs{x}^2/4} d\mathcal{H}^n$. We can therefore talk about the Morse index of a given self-expander, and the Morse flow lines between two self-expanders (asymptotic to the same cone $\mathcal{C}$) are examples of non self-similar solutions coming out of the cone.  \par 
We show that, given a smooth double cone $\mathcal{C} \subset \mathbb{R}^{n+1}$ and a solution to the MCF, $\{\Sigma_t\}_{t \in [0,T]}$, asymptotic to $\mathcal{C}$, then the flow inherits the rotational symmetry of $\mathcal{C}$ at all times. More precisely we prove:
\begin{thm}
\label{t11}
Let $\mathcal{C} \subset \mathbb{R}^{n+1}$ be a smooth, rotationally symmetric double cone. Suppose $\{\Sigma_t\}_{t \in [0,T]}$ is a smooth solution to the mean curvature flow asymptotic to $\mathcal{C}$, in the sense that
\begin{align*}
    \lim_{t \to 0^+} \mathcal{H}^n \llcorner \Sigma_t = \mathcal{H}^n \llcorner \mathcal{C}
\end{align*}
as Radon measures, then $\Sigma_t$ is also rotationally symmetric (with the same axis of symmetry) for any $t \in [0,T]$.
\end{thm}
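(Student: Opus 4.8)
The plan is to adapt the Alexandrov moving-plane (reflection) method to the flow. Since the rotations of $\mathbb{R}^{n+1}$ fixing the $x_1$-axis are generated by the reflections across hyperplanes containing that axis, it suffices to show that each $\Sigma_t$ is invariant under the reflection $R_{v^\perp}$ across $v^\perp$ for every unit vector $v\perp e_1$; fixing such a $v$ and applying an ambient rotation about the axis (which leaves $\mathcal{C}$ unchanged) I may take $v=e_2$. For $\lambda\in\mathbb{R}$ write $P_\lambda=\{x_2=\lambda\}$, $H_\lambda^{\pm}=\{\pm(x_2-\lambda)>0\}$, let $R_\lambda$ be the reflection across $P_\lambda$, and set $\Sigma_t^\lambda=R_\lambda(\Sigma_t\cap\overline{H_\lambda^+})$. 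Denote by $K=\{x_1^2\ge m(x_1)(x_2^2+\cdots+x_{n+1}^2)\}$ the solid double cone with $\partial K=\mathcal{C}$, and by $\Omega_t^{+},\Omega_t^{-}$ the two closed regions cut out by $\Sigma_t$, with $\Omega_t^{+}$ the one that ``continues'' $K$.

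The elementary geometric fact driving the method is that $R_\lambda$ preserves the $x_1$-coordinate while strictly decreasing $x_2^2+\cdots+x_{n+1}^2$ on $H_\lambda^+$ when $\lambda>0$ (since $\abs{2\lambda-x_2}<x_2$ there); hence $R_\lambda(\mathcal{C}\cap H_\lambda^+)\subset\operatorname{int}K$, so that $\mathcal{C}^\lambda:=\Sigma_0^\lambda$ meets $\mathcal{C}$ only along $P_\lambda$ and otherwise lies strictly inside $K$ (and symmetrically for $\lambda<0$ with $H_\lambda^-$). Combining this with the asymptotic hypothesis — promoted, via local regularity and pseudolocality, to smooth convergence $\Sigma_t\to\mathcal{C}$ on compact subsets of $\mathbb{R}^{n+1}\setminus\{0\}$ as $t\to0^+$ together with uniform trapping of $\Sigma_t$ in a neighborhood of $\mathcal{C}$ at spatial infinity for small $t$ — provides the starting configuration: for all sufficiently large $\lambda$, $\Sigma_t^\lambda$ and $\Sigma_t$ are disjoint in $H_\lambda^-$ and $\Sigma_t^\lambda\subset\Omega_t^{+}$ for every $t\in[0,T]$.

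I then decrease $\lambda$. Put
\[
\lambda^*=\inf\bigl\{\lambda_0>0:\ \Sigma_t^\lambda\subset\Omega_t^{+}\ \text{in}\ \overline{H_\lambda^-}\ \text{for all}\ \lambda\ge\lambda_0\ \text{and all}\ t\in[0,T]\bigr\},
\]
a set nonempty by the previous step and closed because the inclusion passes to limits, so I only need $\lambda^*=0$. If $\lambda^*>0$, at $\lambda=\lambda^*$ there is a first contact between $\Sigma_t^{\lambda^*}$ and $\Sigma_t$ over $t\in[0,T]$; it is not at spatial infinity (by the trapping estimate), so it is either an interior touching in $H_{\lambda^*}^-$ at some $t_0$ or a tangential touching along the common boundary on $P_{\lambda^*}$. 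In the interior case the two are ordered MCF solutions touching at an interior point, so the strong maximum principle for the uniformly parabolic equation satisfied by the difference of the two sheets (written as graphs near the contact point) forces local coincidence; backward uniqueness in time and propagation along $\Sigma_t$ then give $R_{\lambda^*}(\Sigma_t)=\Sigma_t$ for all $t$, whence $\mathcal{C}=\lim_{t\to0^+}\Sigma_t$ would be $R_{\lambda^*}$-invariant — impossible for $\lambda^*\neq0$, since reflection across $P_{\lambda^*}$ moves the vertex of $\mathcal{C}$. The boundary case is excluded by the same mechanism together with a Hopf-type lemma at the free boundary $P_{\lambda^*}$. Hence $\lambda^*=0$, i.e.\ $R_0(\Sigma_t\cap\overline{H_0^+})\subset\Omega_t^{+}$; the symmetric sweep from $\lambda<0$ gives $R_0(\Sigma_t\cap\overline{H_0^-})\subset\Omega_t^{+}$, so $R_0(\Sigma_t)\subset\Omega_t^{+}$. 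Then the open set $\operatorname{int}\Omega_t^{-}$ misses $R_0(\Sigma_t)$; being connected and agreeing near infinity with its own reflection (both asymptotic to the $R_0$-invariant set $\mathbb{R}^{n+1}\setminus K$), it satisfies $\operatorname{int}\Omega_t^{-}\subset R_0(\operatorname{int}\Omega_t^{-})$, and applying $R_0$ again forces equality. Thus $\Sigma_t=\partial\Omega_t^{-}$ is $R_0$-invariant; letting $v$ vary over unit vectors perpendicular to $e_1$ yields the rotational symmetry of $\Sigma_t$ for every $t\in[0,T]$.

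I expect the real difficulty to lie not in the moving-plane bookkeeping but in two analytic points needed to license it for these noncompact flows issuing from a singular cone: first, converting the weak ($\mathcal{H}^n$-measure) convergence $\Sigma_t\to\mathcal{C}$ at $t=0$ into uniform geometric control of $\Sigma_t$ near the vertex and, especially, at spatial infinity — where the asymptotic cones of $\Sigma_t$ and of its reflection coincide, so ruling out ``contact at infinity'' requires a genuine estimate rather than a soft argument; and second, handling the moving free boundary $\Sigma_t\cap P_\lambda$, in particular excluding a first contact that is a tangential touching along $P_{\lambda^*}$. These — the control at spatial infinity and the free-boundary Hopf lemma — are where the bulk of the work should go.
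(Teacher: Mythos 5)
Your outline is the same in spirit as the paper's argument: reduce to a reflection across a hyperplane containing the axis, sweep planes perpendicular to a direction orthogonal to the axis simultaneously in all times $t\in[0,T]$, and at a first contact at a positive level apply the parabolic strong maximum principle (interior touching) or a Hopf lemma (touching on the plane) to force local, hence component-wise, coincidence, which is incompatible with the asymptotics since the reflected piece is asymptotic to a \emph{translated} copy of $\mathcal{C}$. The differences are mostly bookkeeping (you phrase the ordering as inclusion in a solid region $\Omega_t^+$ and finish at $\lambda=0$ with a topological argument, whereas the paper orders heights over the reflection hyperplane and finishes by a two-sided sweep producing a touching at level $0$, then the maximum principle/Hopf lemma). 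However, as you yourself flag in the last paragraph, the two points you defer are precisely where the content of the theorem lies, so as it stands the proposal has genuine gaps rather than being a complete proof. The main one is the control at spatial infinity: ``trapping $\Sigma_t$ in a neighborhood of $\mathcal{C}$'' is not by itself enough, because the reflected cone piece and $\mathcal{C}$ have the same blow-down and the separation between the reflected surface and the original one is only of bounded size (of order $2\lambda$), so a neighborhood-of-the-cone statement cannot by itself exclude first contact at infinity, nor does it give the starting configuration for large $\lambda$. The paper supplies exactly this missing ingredient via pseudolocality (\cref{pseudo}), which yields $C^{2,\alpha}$ conical behavior of each time slice (\cref{p24}) and, crucially, graphicality of $\Sigma_t$ over the reflection hyperplane outside a compact set, uniformly after parabolic rescaling (\cref{l41}); graphicality is then built into the definition of the moving-plane set $S$, and with that device the failure of $S$ at the critical level always produces either a finite point where the tangent plane contains the reflection direction (necessarily on the plane) or a finite touching point, because the non-strict ordering passes to the limit level. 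Your proposal contains no mechanism playing this role.

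The second deferred point, the ``free-boundary Hopf lemma,'' is less serious than you anticipate but still needs to be stated and used correctly: since reflection across a fixed hyperplane preserves the MCF equation, at a boundary touching point on $P_{\lambda^*}$ the reflected piece and the original piece are two ordered graphical solutions over their common (vertical) tangent plane in a half parabolic cylinder, and the standard parabolic Hopf lemma for the linear equation satisfied by their difference (\cref{l23}, with \cref{l22} for the interior case) applies; no bespoke free-boundary result is required, but your proof must actually set this up. Two smaller points: your claim that unique continuation plus ``backward uniqueness'' yields $R_{\lambda^*}(\Sigma_t)=\Sigma_t$ for all $t$ overstates what propagation gives (one only gets that a connected component of the reflected piece coincides with part of $\Sigma_t$; the paper's contradiction — that such a component would be asymptotic, in the bounded-distance sense furnished by \cref{p24}, to both $\mathcal{C}$ and its translate — is the safer route, and again uses the fine asymptotics rather than blow-downs); and your endgame assumes $\Sigma_t$ cuts $\mathbb{R}^{n+1}$ into exactly two connected regions, which fails for disconnected solutions (e.g.\ a two-sheeted expander, where the complement has three components), whereas the paper's height-ordering endgame at $s=0$ needs no such topological hypothesis.
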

\begin{rem}
It is likely that only a finite number of such solutions exist. These include self-expanders and Morse flow lines between two self-expanders asymptotic to the same cone, some of which can be constructed using methods from \cite{BW4}. In particular the latter solutions might develop singularities. Indeed, when the parameters $m_1$ and $m_2$ in \cref{cone} are sufficiently small, by \cite{Hel} we can find an unstable (connected) catenoidal self-expander and a disconnected self-expander whose two components are given by the unique self-expanders asymptotic to the top part and bottom part of the cone. One expects that there exists a Morse flow line connecting these two self-expanders. Such a flow line will necessarily develop a neck pinch in order to become disconnected.
\end{rem}
As an easy corollary we obtain the following rotational symmetry result:
\begin{cor}
\label{c12}
Let $\mathcal{C} \subset \mathbb{R}^{n+1}$ be a smooth, rotationally symmetric double cone, then any smooth self-expander $\Sigma$ asymptotic to $\mathcal{C}$ is also rotationally symmetric (with the same axis of symmetry).
\end{cor}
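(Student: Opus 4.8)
The plan is to deduce \cref{c12} from \cref{t11} by passing from a self-expander to its associated self-similar flow. Given a smooth (properly embedded) self-expander $\Sigma \subset \mathbb{R}^{n+1}$ asymptotic to $\mathcal{C}$, I would set $\Sigma_t := \sqrt{t}\,\Sigma$ for $t \in (0,T]$ (any fixed $T>0$) and first verify that $\{\Sigma_t\}$ is a smooth solution of the MCF equations. This is a one-line scaling computation: writing a point of $\Sigma_t$ as $x = \sqrt{t}\,y$ with $y \in \Sigma$, one has $(\partial_t x)^\perp = \tfrac{1}{2\sqrt{t}}\,y^\perp = \tfrac{1}{\sqrt{t}}\,H_\Sigma(y) = H_{\Sigma_t}(x)$, using the self-expander equation $H_\Sigma(y) = y^\perp/2$ together with the homogeneity $H_{\sqrt{t}\,\Sigma}(\sqrt{t}\,y) = t^{-1/2} H_\Sigma(y)$ of the mean curvature vector under dilations.

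Next I would check that this flow satisfies the asymptotic hypothesis of \cref{t11}, namely $\mathcal{H}^n \llcorner \Sigma_t \to \mathcal{H}^n \llcorner \mathcal{C}$ as $t \to 0^+$ in the sense of Radon measures. Since $\mathcal{C}$ is a cone, $\sqrt{t}\,\mathcal{C} = \mathcal{C}$ for every $t>0$, so this is literally the statement that $\lambda\,\Sigma \to \mathcal{C}$ as $\lambda \to 0^+$, which is exactly what it means for the self-expander $\Sigma$ to be asymptotic to $\mathcal{C}$. (If one prefers to interpret ``asymptotic'' in the stronger sense of locally smooth convergence of $\lambda\,\Sigma$ to $\mathcal{C}\setminus\{0\}$ with uniform area-ratio bounds, that hypothesis still implies the measure convergence above, so nothing is lost.)

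Finally, \cref{t11} applied to the flow $\{\Sigma_t\}_{t \in (0,T]}$ gives that each $\Sigma_t$ is rotationally symmetric about the $x_1$-axis; dilating back by $1/\sqrt{t}$ for any single $t$ (say $t=1$) shows $\Sigma$ itself is rotationally symmetric about the same axis, which is the claim. I do not expect any real obstacle in this corollary: the only point requiring a moment's care is the compatibility of conventions, i.e. that ``$\Sigma$ is a self-expander asymptotic to $\mathcal{C}$'' coincides with (or is stronger than) the measure-theoretic asymptotic condition in \cref{t11}, and this is immediate precisely because $\mathcal{C}$ is dilation-invariant.
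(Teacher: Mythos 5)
Your argument is correct and is exactly the intended deduction in the paper: the paper treats \cref{c12} as an immediate consequence of \cref{t11} by viewing the self-expander through its associated self-similar flow $\Sigma_t = \sqrt{t}\,\Sigma$, whose initial condition is $\mathcal{C}$ in the Radon measure sense because $\mathcal{C}$ is dilation-invariant. Your extra remarks on the scaling of the mean curvature vector and on reconciling the notion of ``asymptotic to $\mathcal{C}$'' with the measure-theoretic hypothesis are exactly the right points of care, and nothing further is needed.
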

\begin{rem}
It is expected that no singular self-expander asymptotic to $\mathcal{C}$ exists, but our theorem only applies in the smooth case. The smoothness assumption is in place to avoid further technicality introduced by the moving plane method, see Section 2.4.
\end{rem}
The rotational symmetry is known in many other cases. Fong and McGrath \cite{FM} showed that same conclusion holds if the cone is rotationally symmetric and the expander is mean convex. Bernstein-Wang (Lemma 8.3 in \cite{BW3}) later showed that same conclusion holds if the cone is rotationally symmetric and the expander is weakly stable (in particular, mean convexity implies weak stability so this generalizes the Fong–McGrath result). In contrast, our result applies to all solutions coming out of the cone and does not assume any extra condition about the flow other than smoothness. For other geometric flows, Chodosh \cite{chod} proved rotational symmetry of expanding asymptotically conical Ricci solitons with positive sectional curvature. \par
It is also worth mentioning that, although in general given a rotationally symmetric smooth cone $\mathcal{C}$ there could be multiple self-expanders asymptotic to $\mathcal{C}$ , if there exists a unique self-expander asymptotic to $\mathcal{C}$, it must inherit the rotational symmetry. Uniqueness holds, for example, when the link of $\mathcal{C}$, $\mathcal{L}(C)$, is connected, or, in the double cone case, when the parameters $m_1,m_2$ in \cref{cone} are sufficiently large \cite{BW3}. It is interesting to determine whether the rotational symmetry holds when the link $\mathcal{L}(C)$ has 3 or more connected components. We suspect that counterexamples exist. We refer to \cite{BW0}, \cite{BW3}, \cite{BW1}, \cite{BW2}, and \cite{Ding} for more information on self-expanders. \par 
The proof of \cref{t11} relies on the moving plane method pioneered by Alexandrov to prove that embedded compact constant mean curvature hypersurfaces are round spheres. The method was further employed to minimal surfaces by Schoen \cite{Schoen} to prove certain uniqueness theorems for catenoids. More recently, Martín–Savas-Halilaj–Smoczyk \cite{MSS} showed uniqueness of translators (that is, solutions of the MCF equation that evolve by translating along one fixed direction) with one asymptotically paraboloidal end. Choi–Haslhofer–Hershkovitz \cite{CHH} and Choi–Haslhofer–Hershkovitz–White \cite{CHHW} used a parabolic variant of the method to deduce rotational symmetry of certain ancient solutions to the MCF equation (that is, solutions of the MCF equation which exist on $(-\infty,0)$). These methods were further generalized to non-smooth settings very recently by Haslhofer–Hershkovitz–White \cite{HHW} and by Bernstein–Maggi \cite{BM}. \par 
Although a self-expander $\Sigma$ satisfies an elliptic PDE, the hypersurface obtained after reflecting a self-expander with respect to a hyperplane does not satisfy the above equation anymore (it is rather a translated self-expander). For this reason we could not directly apply the usual elliptic maximum principle and Hopf lemma, and we need to work in spacetime $\mathbb{R}^{n+1} \times [0,T]$ and use the MCF equations directly with a parabolic version of the maximum principles, which will lead to the more general \cref{t11}. Consequently, our method is in spirit closer to that used by \cite{CHH}. 

\subsection*{Acknowledgment} The author would like to thank his advisor, Jacob Bernstein, for numerous helpful advice and constant encouragement, especially during a period of extreme difficulty around the globe. The author would also like to thank Rory Martin-Hagemeyer for useful discussions.

\section{Preliminaries}
\subsection{Notations} Throughout the paper, $B_r(x)$ will denote the Euclidean ball of radius $r$ centered at a point $x \in \mathbb{R}^{n+1}$. By a (smooth) MCF in $\mathbb{R}^{n+1}$ we mean a family of embedded hypersurfaces $\{\Sigma_t\}_{t \in I}$ for some interval $I$ such that
\begin{align*}
    \left(\frac{\partial x}{\partial t}\right)^\perp = H_{\Sigma_t}(x)
\end{align*}
for all $x \in \Sigma_t$, $t \in I$. Given an open set $U \subset \mathbb{R}^{n+1}$, we say $\{\Sigma_t\}_{t \in I}$ is a MCF in $U$ if the above equation is satisfied locally (given a local parametrization of the hypersurface) at every $x \in \Sigma_t \cap U$ and $t \in I$.
\subsection{Pseudolocality for MCF}
We will be frequently using the following pseudolocality result of Ilmanen-Neves-Schulze (see also \cite{HE}):
\begin{thm}[Theorem 1.5 of \cite{INS}]
\label{pseudo}
Let $\{\Sigma_t\}_{t \in (0,T]}$ be a mean curvature flow in $\mathbb{R}^{n+1}$. Given any $\eta > 0$, there is $\delta, \varepsilon > 0$ such that: if $x \in \Sigma_0$ and $\Sigma_0 \cap C_1(x)$ is a graph over $C^n_1(x)$ with Lipschitz constant bounded by $\varepsilon$, then $\Sigma_t \cap C_\delta(x)$ can be written as a graph over $C^n_\delta(x)$ with Lipschitz bounded by $\eta$ for any $t \in [0,\delta^2) \cap [0,T)$.
\end{thm}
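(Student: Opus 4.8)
The statement is a graphical pseudolocality theorem, so the plan is to reduce it to an interior gradient estimate of Ecker--Huisken type run through a localized maximum principle. After an ambient rotation and translation I would normalize $x = 0$ and take the graphical direction to be $e_{n+1}$, so that $\Sigma_0 \cap C_1(0)$ is the graph of a function $u_0$ over the base disk $C^n_1(0)$ with $|Du_0| \le \varepsilon$. Because the graphical mean curvature flow is a quasilinear parabolic equation, short-time existence of a graphical solution in a slightly smaller cylinder is standard; the genuine content of the theorem is the a priori estimate that the gradient stays small, quantitatively, on $C_\delta(0)$ for $t \in [0,\delta^2)$. I would encode graphicality through the gradient function $v = \inner{\nu, e_{n+1}}^{-1} = \sqrt{1 + |Du|^2}$ and aim to prove that $v$ stays close to $1$, i.e.\ $|Du| \le \eta$, on the smaller cylinder.

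The key input is the evolution equation
\[
\left(\partial_t - \Delta_{\Sigma_t}\right) v = -\left(|A|^2 + 2 v^{-2}|\nabla v|^2\right) v \le 0,
\]
which exhibits $v$ as a subsolution of the heat operator along the flow. To localize I would use that the ambient coordinate functions are caloric along the flow, $(\partial_t - \Delta_{\Sigma_t}) x^i = 0$, so that the squared base projection $\psi = \sum_{i=1}^n (x^i)^2$ satisfies $(\partial_t - \Delta_{\Sigma_t})\psi = -2\sum_{i=1}^n |\nabla_{\Sigma_t} x^i|^2 \le 0$; hence a cutoff of the form $\varphi = (\rho^2 - \psi)_+$ (and suitable powers thereof) is a supersolution of the same operator modulo controllable lower-order terms. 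I would then apply the parabolic maximum principle to the product $v\varphi$, whose spatial maximum on each time slice is attained in the interior because $\varphi$ vanishes on the lateral boundary of the cylinder. A direct computation shows that at a first interior maximum of $v\varphi$ the favorable terms $-|A|^2 v$ and $-2v^{-2}|\nabla v|^2$ in the evolution of $v$ absorb the error terms produced by differentiating $\varphi$, so that $v\varphi$ cannot exceed a controlled multiple of its value at $t = 0$. Choosing $\varepsilon$ small relative to $\eta$ and $\delta$ small relative to $\rho$ then yields $|Du| \le \eta$ on $C_\delta(0) \times [0,\delta^2)$, which is exactly the asserted Lipschitz bound.

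Two points require care and constitute the main obstacle. First, the estimate and the graphicality are interdependent: the projection $\psi$ only makes sense where the flow is still a graph over the base plane, so the gradient bound must be propagated by a continuity (open--closed) argument in time — as long as $v \le 2$, say, on the relevant region, the maximum principle improves this to $v$ close to $1$, so the good graphical region cannot degenerate before $t = \delta^2$. Second, one must verify that the lateral boundary contributions genuinely drop out and that the evolving cutoff has the correct sign along the moving surface even though the surface may exit the cylinder; this is precisely the technical heart of the Ecker--Huisken interior estimates, and the constants $\delta,\varepsilon$ emerge from balancing the favorable curvature and gradient terms against the errors coming from $\nabla\varphi$ and $\partial_t\varphi$. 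I would also note a softer alternative via a blow-up contradiction: were the conclusion to fail, one could extract a sequence of flows whose initial data flatten to a static hyperplane yet retain a definite gradient at a controlled scale, contradicting uniqueness of the flow coming out of a plane together with a local regularity theorem; the direct maximum-principle argument above is preferable because it is quantitative and self-contained.
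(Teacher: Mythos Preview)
The paper does not supply a proof of this statement: it is quoted as Theorem~1.5 of Ilmanen--Neves--Schulze and used as a black box, so there is nothing in the present paper to compare your proposal against.

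For what it is worth, your outline via the Ecker--Huisken interior gradient estimate (localized cutoff, evolution of $v=\langle\nu,e_{n+1}\rangle^{-1}$, maximum principle on $v\varphi$, continuity-in-time to keep the graphical region from degenerating) is a standard and essentially correct route to a pseudolocality statement for \emph{smooth} graphical mean curvature flow. The proof in the cited reference proceeds differently, through Huisken's monotonicity formula and White's local regularity theorem, which has the advantage of applying to integral Brakke flows rather than only to classical solutions; the present paper in fact invokes the result in that stronger Brakke setting (see the proofs of Proposition~\ref{p24} and Lemma~\ref{l41}), so the Ecker--Huisken argument alone would not quite cover all the uses made of it here.
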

Here \begin{align*}
    C_r^n(x) = \{(y,y_{n+1}) \in \mathbb{R}^{n+1} \mid \abs{y - x} < r\}
\end{align*}
for $(x,x_{n+1}) \in \mathbb{R}^{n+1}$ is the open cylinder in $\mathbb{R}^{n+1}$ centered at $x$ and 
\begin{align*}
    C_r(x) = \{(y,y_{n+1}) \in C_r^n(x) \mid \abs{y_{n+1} - x_{n+1}} < r\}
\end{align*}
is the closed cylinder. Roughly speaking, this theorem says that if the initial data of our MCF is graphical in some cylinder centered at $x$, then at least for a short time the evolution of the hypersurface stays graphical in a possibly smaller cylinder. We will primarily use this theorem to show that our flow is graphical outside of a large ball for a short time, although strictly speaking we sometimes need to apply the above theorem in the context of integral Brakke flow.
\subsection{Parabolic Maximum Principles}
In this section $Z_r(x,t)$ will denote the spacetime cylinder of radius $r$ centered at $(x,t) \in \mathbb{R}^{n} \times \mathbb{R}$; that is,
\begin{align*}
    Z_r(x,t) = \{(y,s) \in \mathbb{R}^{n} \times \mathbb{R} \mid \abs{y-x} < r, \abs{t-s} < r^{2}\}.
\end{align*}
$Z_r^-(x,t)$ will denote the part of cylinder $Z_r(x,t)$ whose time component is smaller than $t$. To carry out the moving plane method, the most important ingredients are the maximum principle and Hopf lemma. In our case we need a version of those theorems applicable to graphical solutions of MCF; that is, functions $u: Z_r^-(0,0) \to \mathbb{R}$ satisfying the following parametrized PDE:
\begin{align*}
    u_t = \sqrt{1 + \abs{\nabla u}^2} \Div\left(\frac{\nabla u}{\sqrt{1+\abs{\nabla u}^2}}\right).
\end{align*}
Observe that the difference of two graphical solutions to MCF satisfies a second-order linear parabolic PDE (provided the gradients are bounded a priori, which will be the case since our solutions are asymptotically conical), so by standard theory of linear parabolic PDEs \cite{Lieb} we have (cf. Section 6.2 in \cite{CHH}):
\begin{lem}[Maximum Principle]
\label{l22}
Suppose $u,v$ are graphical solutions to the MCF in a parabolic cylinder $Z_r^-(0,0)$ with $u(0,0) = v(0,0)$. If $u \le v$ in $Z_r^-(0,0)$, then $u = v$ in $Z_r(0,0)$.
\end{lem}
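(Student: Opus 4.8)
The plan is to reduce the statement to the strong maximum principle for a linear, uniformly parabolic equation satisfied by the difference $w := v - u \ge 0$. Write the mean curvature operator in nondivergence form: a direct computation gives
\[
    \mathcal{M}[\phi] := \sqrt{1+\abs{\nabla\phi}^2}\,\Div\!\left(\frac{\nabla\phi}{\sqrt{1+\abs{\nabla\phi}^2}}\right) = \left(\delta^{ij} - \frac{\partial_i\phi\,\partial_j\phi}{1+\abs{\nabla\phi}^2}\right)\partial_i\partial_j\phi =: a^{ij}(\nabla\phi)\,\partial_i\partial_j\phi ,
\]
so that both $u$ and $v$ solve $\phi_t = a^{ij}(\nabla\phi)\,\partial_i\partial_j\phi$. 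Subtracting the two equations and applying the fundamental theorem of calculus along the segment $s \mapsto \big((1-s)\nabla u + s\nabla v,\ (1-s)D^2u + sD^2v\big)$, $s\in[0,1]$, expresses the difference as
\[
    w_t = A^{ij}(x,t)\,\partial_i\partial_j w + B^k(x,t)\,\partial_k w ,\qquad A^{ij}(x,t) = \int_0^1 a^{ij}\big((1-s)\nabla u + s\nabla v\big)\,ds ,
\]
with $B^k$ given by a similar integral of $\partial a^{ij}/\partial p_k$ against $\partial_i\partial_j\big((1-s)u+sv\big)$. Since $u,v$ are smooth these coefficients are continuous on $Z_r^-(0,0)$, and since the symmetric matrix $a^{ij}(p)$ has eigenvalues in $\big[(1+\abs{p}^2)^{-1},\,1\big]$, a uniform bound on $\abs{\nabla u}$ and $\abs{\nabla v}$ makes the equation for $w$ uniformly parabolic with bounded coefficients.

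The steps are then: first, verify the algebraic identity above together with the ellipticity bounds for $a^{ij}$; second, observe that the a priori gradient bound — which holds in our setting because the graphical solutions in question come from asymptotically conical flows and hence have uniformly bounded gradient — renders $A^{ij}$ uniformly elliptic and $B^k$ bounded; third, apply the strong maximum principle for linear parabolic equations (see, e.g., \cite{Lieb}). Since $w \ge 0$ throughout $Z_r^-(0,0)$ and $w(0,0)=0$ with $(0,0)$ at the final time of the backward cylinder, the strong maximum principle forces $w$ to vanish at every point of $Z_r^-(0,0)$ that can be joined to $(0,0)$ by a continuous path along which the time coordinate is nondecreasing. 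As $Z_r^-(0,0)$ is an open cylinder, every one of its points has this property, whence $w \equiv 0$, i.e.\ $u \equiv v$, on $Z_r^-(0,0)$, which is exactly the portion of $Z_r(0,0)$ on which $u$ and $v$ are defined.

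The identity and the integral representation of the coefficients are routine; the only real inputs are the a priori gradient bound needed for uniform parabolicity and the parabolic strong maximum principle itself, both standard once the setup is in place. I expect the only (minor) subtlety to be bookkeeping the direction of propagation: the vanishing of $w$ is carried backward in time from $(0,0)$, so it is essential that $(0,0)$ sits at the top of $Z_r^-(0,0)$ and that this cylinder is connected — which is precisely why the conclusion is naturally read on $Z_r^-(0,0)$, equivalently on $Z_r(0,0) \cap \{t \le 0\}$.
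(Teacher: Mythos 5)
Your proof is correct and takes essentially the same route as the paper, which offers no separate argument beyond observing that the difference of two graphical MCF solutions satisfies a second-order linear, uniformly parabolic equation (thanks to the a priori gradient bound) and citing the standard parabolic strong maximum principle \cite{Lieb}; you have simply written out the linearization, the ellipticity bounds for $a^{ij}$, and the backward-in-time propagation explicitly. The one caveat is the point you already flagged: the strong maximum principle yields $u = v$ on $Z_r^-(0,0)$, and the lemma's stated conclusion on all of $Z_r(0,0)$ tacitly assumes the solutions extend to the forward part of the cylinder (as they do in the application), where equality then follows by uniqueness of the graphical flow.
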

\begin{lem}[Hopf Lemma]
\label{l23}
Suppose $u,v$ are graphical solutions to the MCF in a half parabolic cylinder $Z_r^-(0,0) \cap \{x_{1} \ge 0\}$ with $u(0,0) = v(0,0)$ and $\frac{\partial u}{\partial x_1}(0,0) = \frac{\partial v}{\partial x_1}(0,0)$. If $u \le v$ in $Z_r^-(0,0) \cap \{x_1 \ge 0\}$, then $u = v$ in $Z_r^-(0,0) \cap \{x_1 \ge 0\}$.
\end{lem}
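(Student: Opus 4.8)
The plan is to reduce \cref{l23} to the classical Hopf boundary point lemma for linear uniformly parabolic equations. Write the graphical MCF operator as $\mathcal{M}[z] = \sqrt{1+\abs{\nabla z}^2}\,\Div\!\big(\nabla z/\sqrt{1+\abs{\nabla z}^2}\big)$, so that $u_t = \mathcal{M}[u]$ and $v_t = \mathcal{M}[v]$, and set $w := v - u \ge 0$ on $H := Z_r^-(0,0) \cap \{x_1 \ge 0\}$. Applying the fundamental theorem of calculus to $s \mapsto \mathcal{M}[u + s\,w]$ shows that in $H$ the function $w$ solves a linear equation
\begin{align*}
  w_t = a^{ij}(x,t)\,\partial_i\partial_j w + b^i(x,t)\,\partial_i w,
\end{align*}
where $a^{ij}$, $b^i$ are smooth functions; since $u$, $v$ have \emph{a priori} bounded gradients (this is where the asymptotically conical hypothesis enters, exactly as in the remark preceding the lemma) and bounded second derivatives on any slightly smaller cylinder, the operator on the right is uniformly parabolic with bounded coefficients. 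Note also $w(0,0) = 0$ and $\partial_{x_1} w(0,0) = \partial_{x_1} v(0,0) - \partial_{x_1} u(0,0) = 0$. The goal is to prove $w \equiv 0$.

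Suppose, for contradiction, that $w \not\equiv 0$ on $H$. Since $w$ is continuous, there must then be a point $q = (x_q,t_q)$ in the open region $\{\abs{x}<r,\ x_1>0,\ -r^2<t<0\}$ with $w(q) > 0$, for otherwise $w$ would vanish on all of $H$. I would then invoke the strong maximum principle for nonnegative solutions: if $w(p) = 0$ at some $p = (x_p, t_p)$ in that open region with $t_p > t_q$, then joining $p$ to $q$ by a path along which the time coordinate is nonincreasing — first the segment $p \to (x_q, t_p)$, then $(x_q, t_p) \to q$, both of which lie in the convex set $\{\abs{x}<r,\ x_1>0,\ -r^2<t<0\}$ — would force $w(q) = 0$, a contradiction. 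Hence $w > 0$ at every point of that region with time coordinate in $(t_q, 0]$; in particular $w$ is strictly positive on a one-sided parabolic neighborhood $N$ of $(0,0)$ with $N \subset \{x_1 > 0,\ t \le 0\}$, while $w(0,0) = 0$ is a minimum. Since the face $\{x_1 = 0\}$ is flat and $(0,0)$ lies away from the remaining edges of $H$, the point $(0,0)$ satisfies the interior parabolic ball condition in the spatial direction $x_1$, so the Hopf boundary point lemma applied to $w$ at $(0,0)$ yields $\partial_{x_1} w(0,0) > 0$. This contradicts $\partial_{x_1} w(0,0) = 0$, and therefore $w \equiv 0$, i.e.\ $u = v$ on $H$.

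The one point that genuinely needs care is the location of the contact point: $(0,0)$ sits simultaneously on the spatial boundary $\{x_1 = 0\}$ \emph{and} on the final time slice $\{t = 0\}$, so neither the interior strong maximum principle nor a careless boundary Hopf lemma applies directly. The resolution, as above, is first to propagate positivity of $w$ backward in time with the strong maximum principle so that $w > 0$ holds on a full one-sided parabolic neighborhood of $(0,0)$ — not merely on the slice $t = 0$, which by itself would not suffice — and only then to run the Hopf argument in the spatial direction $x_1$; because that distinguished direction is spatial, the comparison barrier uses only times $t \le 0$ and the conclusion $\partial_{x_1} w(0,0) > 0$ is legitimate even though $(0,0)$ is at the final time. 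One could alternatively sidestep all discussion of edges and corners by replacing $H$ at the outset with a small half-ball-type region tangent to $\{x_1 = 0\}$ at $(0,0)$ before quoting the standard parabolic estimates of \cite{Lieb}.
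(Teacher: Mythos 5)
Correct, and essentially the paper's own approach: the paper gives no detailed argument for \cref{l23}, merely observing that $w = v-u$ satisfies a second-order linear uniformly parabolic equation (using the a priori gradient bounds) and citing standard parabolic theory \cite{Lieb} together with Section 6.2 of \cite{CHH}. Your write-up simply fleshes out that citation---linearizing via the fundamental theorem of calculus, propagating positivity backward in time with the strong maximum principle, and then running the Hopf boundary point argument in the spatial direction $x_1$, correctly noting why the barrier comparison is legitimate even though $(0,0)$ sits on both the lateral boundary and the final time slice.
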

\subsection{Asymptotically Conical Mean Curvature Flow}
Here we will briefly discuss the class of MCFs we consider in \cref{t11}. We need our MCF to be at least $C^{2,\alpha}$-asymptotically conical in order to apply the maximum principle and Hopf Lemma. This will almost not be an issue if we assume our cone is at least $C^3$, as the following proposition shows. 
\begin{prop}[cf. Proposition 3.3 in \cite{BW1}]
\label{p24}
Let $\mathcal{C}$ be a $C^3$ cone and suppose $\{\Sigma_t\}_{t \in (0,T]}$ is a MCF such that
\begin{align*}
    \lim_{t \to 0^+} \mathcal{H}^n \llcorner \Sigma_t = \mathcal{H}^n \llcorner \mathcal{C},
\end{align*}
then we have for $\alpha \in [0,1)$ and $t \in (0,T)$,
\begin{align*}
    \lim_{\rho \to 0^+} \rho \Sigma_t =\mathcal{C} \text{ in } C^{2,\alpha}_{loc}(\mathbb{R}^{n+1} \setminus \{0\}).
\end{align*}
\end{prop}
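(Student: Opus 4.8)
The plan is to deduce the proposition from the pseudolocality theorem \cref{pseudo}, in the spirit of Proposition~3.3 in \cite{BW1}. First note the reformulation: for a fixed $t\in(0,T)$ and a fixed annulus $A=\{a\le\abs{z}\le b\}\subset\mathbb{R}^{n+1}\setminus\{0\}$ one has $\rho\Sigma_t\cap A=\rho\,(\Sigma_t\cap\{a/\rho\le\abs{x}\le b/\rho\})$, so the claimed convergence is exactly the assertion that $\Sigma_t$ is asymptotically conical at spatial infinity with asymptotic cone $\mathcal{C}$, together with scale-invariant $C^{2,\beta}$ bounds for the graph. I would also record the exact self-similarity of $\mathcal{C}$: since $\mathcal{C}$ is a $C^3$ cone, its second fundamental form at $p\in\mathcal{C}$ is $O(\abs{p}^{-1})$, so for every $\eta>0$ there is a relative radius $c=c(\mathcal{C},\eta)\in(0,1)$ with the property that at every $p\in\mathcal{C}$, after a rotation depending on $p$, $\mathcal{C}$ is a Lipschitz graph over the $n$-dimensional cylinder of radius $c\abs{p}$ about $p$ with Lipschitz constant at most $\varepsilon(\eta)$, where $\varepsilon(\eta)$ (paired with $\delta(\eta)$) is the output of \cref{pseudo} for the parameter $\eta$.

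Next, regard $\{\Sigma_t\}_{t\in(0,T]}$ as an integral Brakke flow on $[0,T]$ with $\mathcal{H}^n\llcorner\Sigma_0=\mathcal{H}^n\llcorner\mathcal{C}$ — legitimate by the hypothesized measure convergence — so that the Brakke-flow form of \cref{pseudo} applies with initial datum $\mathcal{C}$. Rescaling the pseudolocality conclusion parabolically by the factor $c\abs{p}$ and using the first paragraph, one obtains: for every $\eta>0$ there is $C_\eta>0$ depending only on $\mathcal{C}$ and $\eta$ so that for every $s\in(0,T]$ and every $p\in\mathcal{C}$ with $\abs{p}>C_\eta\sqrt{s}$, the flow $\Sigma_s$ is, in a cylinder of radius $\sim c\abs{p}$ about $p$, an $\eta$-Lipschitz graph which moreover stays within distance $\eta\abs{p}$ of $\mathcal{C}$ (the quantitative ``little motion'' of pseudolocality — here $s$ is a fraction $\lesssim C_\eta^{-2}$ of the natural time scale $\abs{p}^2$; if one prefers not to invoke this part of \cref{pseudo} directly, it follows from the interior parabolic estimates of the next paragraph applied up to the initial time). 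Specializing to our fixed $t$ and letting $p$ range over $\mathcal{C}\cap\{\abs{x}>C_\eta\sqrt{t}\}$ shows: for every $\eta>0$, $\Sigma_t\cap\{\abs{x}>C_\eta\sqrt{t}\}$ is an $\eta$-Lipschitz graph over $\mathcal{C}$ lying in the cone $N_\eta(\mathcal{C}):=\{\dist(x,\mathcal{C})<\eta\abs{x}\}$. Since $\bigcap_{\eta>0}\overline{N_\eta(\mathcal{C})}=\mathcal{C}$, the tangent cone of $\Sigma_t$ at infinity is forced to be $\mathcal{C}$, with multiplicity one.

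To finish, I would upgrade regularity and assemble the limit. On the graphical region, the graph function $u$ of $\Sigma_t$ over $\mathcal{C}$ solves the graphical mean curvature flow equation; the uniform gradient bound renders it uniformly parabolic with coefficients controlled by the $C^3$ geometry of $\mathcal{C}$, so interior parabolic estimates (\cite{Lieb}, first De~Giorgi--Nash then Schauder), applied at scale $\abs{p}$ and rescaled, give scale-invariant bounds $\abs{p}^{-1}\abs{u}+\abs{\nabla u}+\abs{p}\abs{\nabla^2 u}+\abs{p}^{1+\beta}[\nabla^2 u]_{\beta}=O(\eta)$ on $\{\abs{x}>2C_\eta\sqrt{t}\}$ for every $\beta<1$. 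Unwinding the dilations: for every annulus $A$ and every $\eta>0$ there is $\rho_0(\eta,A,t)>0$ such that for $\rho<\rho_0$ the slice $\rho\Sigma_t\cap A$ is an $\eta$-small $C^{2,\beta}$ graph over $\mathcal{C}\cap A$ (the $C^0$-smallness coming from the second paragraph, the higher derivatives from the bounds just stated). Sending $\rho\to0$ first and then $\eta\to0$, a standard compactness argument (the graph functions are precompact in $C^{2,\alpha}$ for $\alpha<\beta$ and tend to $0$ in $C^0$) together with a diagonalization over an exhaustion of $\mathbb{R}^{n+1}\setminus\{0\}$ by annuli yields $\rho\Sigma_t\to\mathcal{C}$ in $C^{2,\alpha}_{\loc}(\mathbb{R}^{n+1}\setminus\{0\})$ for every $\alpha\in[0,1)$.

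The step I expect to be the main obstacle is the scale-uniformity in the second paragraph: running \cref{pseudo} at all large scales simultaneously with constants depending only on $\mathcal{C}$ and $\eta$, and extracting from it not merely graphicality but honest closeness of the flow to $\mathcal{C}$ at large radii, so that the asymptotic cone is exactly $\mathcal{C}$ rather than some nearby cone. This is what forces the order of quantifiers — choose $\eta$, then pass to scales $1/\rho\gg C_\eta\sqrt{t}$ — and it rests on the exact self-similarity of $\mathcal{C}$ and on the ``little motion'' content of pseudolocality in its Brakke-flow form. A secondary technical point is checking that $\{\Sigma_t\}$ genuinely extends to an integral Brakke flow attaining $\mathcal{C}$ at time $0$, needed to invoke that Brakke-flow pseudolocality; the companion statement that $\Sigma_s\to\mathcal{C}$ in $C^{\infty}_{\loc}(\mathbb{R}^{n+1}\setminus\{0\})$ as $s\to0^+$ fits in the same framework via White's local regularity theorem, since the Gaussian density of the flow at each $(p,0)$, $p\in\mathcal{C}\setminus\{0\}$, equals one.
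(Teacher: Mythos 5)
Your proposal takes essentially the same route as the paper's proof (which itself follows Proposition 3.3 of \cite{BW1}): extend the flow by the initial datum $\mathcal{C}$ to an integral Brakke flow, use \cref{pseudo} to get small-Lipschitz graphicality of $\Sigma_t$ over the cone at scales comparable to the distance from the origin, and then upgrade to scale-invariant (weighted) $C^{2,\alpha}$ bounds by interior parabolic Schauder estimates, which gives the blow-down convergence. Your treatment of the ``little motion''/height-closeness point via parabolic estimates up to the initial time is precisely how the paper's sketch obtains the bound $\abs{u_{x_0}(t,x)} \le C(\abs{x-x_0}^2 + t)$, so there is no substantive difference in method.
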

\begin{proof}
It is enough to prove that locally $\Sigma_t$ is a $C^{2,\alpha}$ normal graph over $\mathcal{C}$ outside of a large ball. By \cref{pseudo} (strictly speaking we need to consider the flow together with the initial condition $\Sigma_0 = \mathcal{C}$ as an integral Brakke flow and apply the theorem for Brakke flows), there is $\delta > 0$ such that $\Sigma_t \cap C_\delta(x_0)$ can be written as a normal graph over $C_\delta^n(x)$ with Lipschitz constant bounded by $\eta$ for $t \in (0,\delta^2)$. This induces a map $u_{x_0} :[0,\delta^2) \times C_\delta^n(x_0) \to \mathbb{R}$ whose graph describes part of the flow in the space-time of the flow:
\begin{align*}
    \mathcal{M} = \mathcal{C} \times \{0\} \cup \bigcup_{t \in (0,T)} \Sigma_t \times \{t\}.
\end{align*}
Since $\{\Sigma_t\}$ is a MCF, $u_x$ satisfies the parametrized equation
\begin{align*}
    \frac{\partial u_{x_0}}{\partial t} = \sqrt{1 + \abs{\nabla_x u_{x_0}}^2} \Div\left(\frac{\nabla_x u_{x_0}}{\sqrt{1+ \abs{\nabla_x u_{x_0}}^2}}\right)
\end{align*}
with initial conditions $u_{x_0}(0,x_0) = \abs{\nabla_x u_{x_0}(0,x_0)} = 0$. Since we did not introduce the various H\"{o}lder and weighted H\"{o}lder norms (to account for the cone structure), we will only briefly outline the proof, which is essentially the same as the proof of Proposition 3.3 in \cite{BW1}. This is a quasilinear parabolic PDE in divergence form and we may use Schauder estimates (Theorem 5.1 in Chapter 5 of \cite{LSU}) to get
\begin{align*}
    \abs{\nabla_x u_{x_0}(t,x)} \le C(\abs{x-x_0} + \sqrt{t}) \le C\delta.
\end{align*}
For the pointwise estimate on $u_{x_0}(t,x)$ we have to combine the above and the MCF equation to bound the time derivative $\abs{\partial_t u_{x_0}(t,x)}$, giving an estimate of the form
\begin{align*}
     \abs{u_{x_0}(t,x)} \le C(\abs{x-x_0}^2 + t) \le C\delta^2.
\end{align*}
Finally, Schauder estimates (Theorem 1.1 in Chapter 6 of \cite{LSU}) gives Holder semi-norm bounds for any $\alpha \in (0,1)$. Combining these estimates yield a weighted-$C^{2,\alpha}$ estimate on the cone, which is what we needed. 
\end{proof}
Unfortunately pseudolocality only gives normal graphicality outside of a large compact set, and so we can not conclude that the entire flow will be of class $C^{2,\alpha}$. For this reason it is assumed that the MCF is smooth to begin with in \cref{t11}. We note that it might be possible to remove this assumption using a moving plane method in non-smooth settings such as those presented in \cite{CHHW}, \cite{BM} or \cite{HHW}.
\section{Rotational Symmetry}

In this section we prove \cref{t11}. As claimed before, a direct consequence of the pseudolocality theorem \cref{pseudo} is the graphicality of the immortal solution outside of a large ball. For the next lemma we denote $\Sigma^+ = \Sigma \cap \{x_{n+1} > 0\}$ and $\Sigma^- = \Sigma \cap \{x_{n+1} < 0\}$ for $\Sigma \subset \mathbb{R}^{n+1}$.

\begin{figure}
\centering
\resizebox{2in}{!}{\input{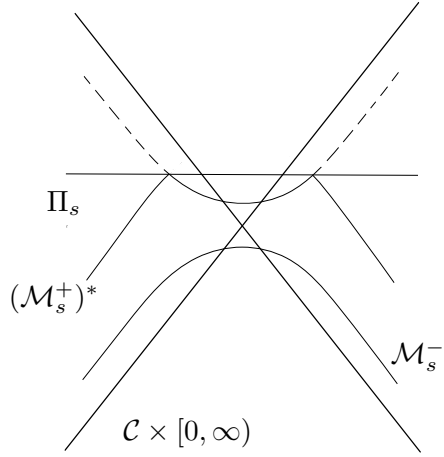}}
\caption{A typical picture of the moving plane}
\end{figure}

\begin{lem}
\label{l41}
Let $\mathcal{C}, \{\Sigma_t\}_{t \in (0,T)}$ be as in \cref{t11}. For each $t \in [0,T)$ there is $R = R(\mathcal{C},\Sigma,t)$ such that $(\Sigma_t)^+ \setminus B_R(0)$ is graphical over $\Pi_0 \setminus B_R(0)$, where $\Pi_0 = \{x_{n+1} = 0\}$; that is, the projection $\pi:(\Sigma_t)^+ \setminus B_R(0) \to \Pi_0$ is injective. The same holds for $(\Sigma_t)^-$.
\end{lem}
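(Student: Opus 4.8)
\emph{The plan} is to deduce \cref{l41} from the fact --- contained in \cref{p24}, or in \cref{pseudo} for the $C^1$ statement --- that outside a large ball $\Sigma_t$ is a small normal graph over $\mathcal{C}$, by reducing to the corresponding property of $\mathcal{C}$ and checking the latter explicitly. For $t=0$ there is nothing to prove, since $\Sigma_0=\mathcal{C}$, so I would fix $t\in(0,T)$.

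\emph{Reduction.} By \cref{p24}, $\rho\Sigma_t\to\mathcal{C}$ in $C^{2,\alpha}_{\loc}(\mathbb{R}^{n+1}\setminus\{0\})$, so for all large $R$ the set $\Sigma_t\setminus B_R(0)$ is a normal graph over $\mathcal{C}\setminus B_{R/2}(0)$ whose graph function has scale-invariant $C^{2,\alpha}$-norm tending to $0$ as $R\to\infty$; in particular $\nu_{\Sigma_t}$ is uniformly close (at scale $|x|$) to $\nu_{\mathcal{C}}$ along this correspondence. Since each cone piece $\mathcal{C}_i$ bounds the convex cone $\Omega_i=\{x_1^2>m_i(x_2^2+\cdots+x_{n+1}^2),\ \sgn x_1=(-1)^{i+1}\}$, comparison with $\mathcal{C}_i$ (applied in the region far from the other piece, where the flow emerges smoothly from $\mathcal{C}_i$) confines $\Sigma_t\setminus B_R(0)$ to $\Omega_1\cup\Omega_2$, so $\pi(\Sigma_t\setminus B_R(0))$ lands in $V_1\cup V_2$, where $V_1,V_2\subset\Pi_0$ are the two disjoint open convex conical sets over which $\mathcal{C}^+$ is the graph $x_{n+1}=g_i:=\sqrt{x_1^2/m_i-(x_2^2+\cdots+x_n^2)}$, and in fact $|x_{n+1}|<g_i(\pi(x))$ there.

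\emph{The two regimes.} On $\mathcal{C}_i$ one has the exact identity $\langle\nu_{\mathcal{C}_i},e_{n+1}\rangle=-\sqrt{m_i}\,x_{n+1}/|x|$ (for a suitable orientation), so $\pi|_{\mathcal{C}^+\setminus\{0\}}$ is injective and $\langle\nu_{\mathcal{C}},e_{n+1}\rangle$ is bounded away from $0$ (scale-invariantly) except near the equator $\mathcal{C}\cap\Pi_0$. Away from the equator the transfer to $\Sigma_t$ is routine: there $\langle\nu_{\Sigma_t},e_{n+1}\rangle$ keeps a fixed sign once $R$ is large, so $\pi$ is a local diffeomorphism on that part of $(\Sigma_t)^+\setminus B_R$; being $C^1$-close to $\pi|_{\mathcal{C}^+}$ and proper over a simply connected subset of $V_i\setminus B_{R'}(0)$, a covering/degree argument upgrades this to global injectivity on that part. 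Near a ruling $\ell\subset\mathcal{C}\cap\Pi_0$, $\mathcal{C}_i$ is instead graphical in the $e_1$-direction, $x_1=\sqrt{m_i}\,\abs{(x_2,\dots,x_{n+1})}$, which for fixed $(x_2,\dots,x_n)$ (bounded away from $0$, since $\ell$ avoids the origin) is strictly convex and strictly increasing in $x_{n+1}$ on $\{x_{n+1}>0\}$, with minimum exactly on $\Pi_0$; by the reduction step $\Sigma_t$ is there a $C^2$-close graph $x_1=\widetilde\psi(x_2,\dots,x_{n+1})$, still strictly convex in $x_{n+1}$, so $x_{n+1}\mapsto\widetilde\psi(\,\cdot\,,x_{n+1})$ is injective on $\{x_{n+1}>0\}$ --- hence $\pi|_{(\Sigma_t)^+}$ is injective near $\ell$ --- as soon as the critical point of this map lies in $\{x_{n+1}\le0\}$.

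\emph{The main obstacle} is precisely this last point, equivalently that the ``horizon'' $\{\langle\nu_{\Sigma_t},e_{n+1}\rangle=0\}$ of $\Sigma_t$ stays in $\{x_{n+1}\le0\}$ outside $B_R$ (it then lies in $\Pi_0$ once one also treats $(\Sigma_t)^-$). Plain $C^{2,\alpha}$-closeness only places the horizon within $O(\varepsilon(R)R)$ of $\Pi_0$ without controlling its side, so the hard part will be to rule out the horizon rising above $\Pi_0$ using sharper information --- the confinement $|x_{n+1}|<g_i(\pi(x))$ with $g_i\to0$ at the equator, together with the finer (decaying, not merely bounded) asymptotics of $\Sigma_t$ toward $\mathcal{C}$ extracted in the proof of \cref{p24} --- which should pin the horizon to the correct side of $\Pi_0$ for $R$ large. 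Granting this, one fixes $R$ so that both regimes apply, glues the pieces to get that $\pi\colon(\Sigma_t)^+\setminus B_R(0)\to\Pi_0$ is injective, and obtains the statement for $(\Sigma_t)^-$ by replacing $e_{n+1}$ by $-e_{n+1}$.
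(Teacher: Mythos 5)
Your reduction step (closeness to $\mathcal{C}$ at scale, via \cref{p24}/\cref{pseudo}) matches the paper, but the proposal has a genuine gap, and you have in effect flagged it yourself: the entire content of \cref{l41} near the equatorial directions of $\mathcal{C}$ is exactly the step you defer (``granting this''), namely that outside $B_R(0)$ the set where $e_{n+1}$ is tangent to $\Sigma_t$ does not enter $\{x_{n+1}>0\}$, i.e.\ that the projection $\pi$ does not fold inside $(\Sigma_t)^+$. Since, as you note, closeness to the cone at scale does not decide the side of $\Pi_0$ on which this ``horizon'' lies, the lemma is not proved as written. Moreover, the tool you propose to close the gap --- confinement of $\Sigma_t\setminus B_R(0)$ in $\Omega_1\cup\Omega_2$ ``by comparison with $\mathcal{C}_i$'' --- is not available: $\mathcal{C}$ is not a static solution of mean curvature flow and is the initial condition of the very flow under consideration, so no avoidance principle applies against it, and there is no reason a solution as in \cref{t11} (think of a flow with a neck joining the two sheets, as for a catenoidal expander) should approach $\mathcal{C}$ from inside the solid cone; the asserted inequality $\abs{x_{n+1}}<g_i(\pi(x))$ is unjustified and may fail. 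Even granting confinement, ``finer decaying asymptotics should pin the horizon'' is a hope rather than an argument, so the main obstacle remains open in your proposal.

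For comparison, the paper's proof uses neither \cref{p24} nor any barrier or confinement statement. It applies pseudolocality (\cref{pseudo}) to the flow extended by $\Sigma_0=\mathcal{C}$ (as an integral Brakke flow) at every point of $\mathcal{C}^+\setminus B_1(0)$, obtaining $t_1>0$ so that for $0<t<t_1$ the surface $(\Sigma_t)^+\setminus B_1(0)$ is a normal graph over $\mathcal{C}^+\setminus B_1(0)$ with Lipschitz constant at most $\eta$; parabolic rescaling then gives, for each fixed $t$, a radius $R$ (of order $t/t_1$) with the same property outside $B_R(0)$, with $\eta$ as small as desired. Graphicality over $\Pi_0$ is then inherited from the cone: $\mathcal{C}^+$ projects injectively to $\Pi_0$, and the tangent planes of $(\Sigma_t)^+\setminus B_R(0)$ are $\eta$-close to those of $\mathcal{C}^+$. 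Note that the delicate point you isolated (the behaviour where the cone's tangent planes become nearly vertical) is handled there through the normal-graph statement over $\mathcal{C}^+$ itself --- the decomposition into $\pm$ parts is read off on the cone side of the graph correspondence and the Lipschitz constant is made small --- not through any one-sided barrier; if you want to repair your two-regime argument, that is the statement to aim for, rather than confinement in the solid cones.
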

\begin{proof}
    Let $\eta > 0$. Consider the MCF $\{\Sigma_t\}_{t \in (0,T)}$ together with the initial data $\Sigma_0 = \mathcal{C}$ and apply \cref{pseudo}. Since the cone is smooth this can be treated as an integral Brakke flow (with no sudden mass loss). Take any point $x \in \mathcal{C}^+ \setminus B_1(0)$. Since $\mathcal{C}^+$ is clearly graphical over itself, by pseudolocality theorem \cref{pseudo} there exists $t_1 > 0$ such that $0 < t < t_1$ implies $(\Sigma_t)^+ \cap C_{\sqrt{t_1}}(x)$ is a normal graph over $B_{\sqrt{t_1}}(x)$ with Lipschitz constant bounded above by $\eta$. Since $x$ is arbitrary, this shows that $(\Sigma_t)^+ \setminus B_1(0)$ is a normal graph over $\mathcal{C}^+ \setminus B_1(0)$ for $0 < t < t_1$ with Lipschitz constant bounded above by $\eta$. For a general $t > t_1$ parabolic rescaling shows shows that for $R = tt_1^{-1}$ we have that $(\Sigma_t)^+ \setminus B_R(0)$ is a normal graph over $(\mathcal{C}^+) \setminus B_R(0)$ with Lipschitz constant bounded above by $\eta$. \par 
    The cone $\mathcal{C}^+$ is graphical over $\Pi_0$ with a fixed angle $\theta < \frac{\pi}{2}$. Moreover we have proved that given $\eta > 0$ there is $R >0$ such that $(\Sigma_t)^+ \setminus B_R(0)$ is a normal graph over $\mathcal{C}^+ \setminus B_R(0)$, so by choosing $\eta$ small enough we have that the angle between the tangent planes of $\mathcal{C}^+ \setminus B_R(0)$ and $(\Sigma_t)^+ \setminus B_R(0)$ are as small as we want. Hence $(\Sigma_t)^+ \setminus B_R(0)$ is graphical over $\Pi_0$ as well.
\end{proof}

For the rest of the section, let 
\begin{align*}
    \Pi_s = \{(x,x_{n+1}) \in \mathbb{R}^{n+1} \mid x_{n+1} =  s\} \times [0,\infty) \subset \mathbb{R}^{n+1} \times [0,\infty)
\end{align*}
be the hyperplane at level $s$ in spacetime.  Given a set $A \subset \mathbb{R}^{n+1} \times [0,\infty)$ and $t,s \in [0,\infty)$ we let 
\begin{align*}
    A^t = \{(x,x_{n+1},t') \in A \mid t' = t\}
\end{align*}
be the time $t$ slice of $A$,
\begin{align*}
    A_s^+ = \{(x,x_{n+1},t) \in A \mid x_{n+1} >  s\}
\end{align*}
be the part of $A$ lying above $\Pi_s$, $A_s^-$ be the part of $A$ lying below $\Pi_s$ and finally 
\begin{align*}
    A^*_s = \{(x,x_{n+1},t) \mid (x, 2s - x_{n+1},t) \in A\}
\end{align*}
be the reflection of $A$ across $\Pi_s$, but we will often drop the subscript $s$ when it is understood to avoid excessive subscripts. Given two sets $A,B \subset \mathbb{R}^{n+1} \times [0,\infty)$ we say $A > B$ if for any $(x,x_{n+1},t) \in A$ we have $x_{n+1} > y_{n+1}$ for any $(x,y_{n+1},t) \in B$ (if there is any such point). 
\begin{proof}[Proof of \cref{t11}]
    Without loss of generality assume that $\mathcal{C}$'s axis of symmetry is the $x_1$-axis. Evidently it suffices to show that the flow preserves the reflection symmetry across any hyperplane containing the $x_1$-axis, which without loss of generality we will take to be $\{x_{n+1} = 0\}$. \par 
    We will use the moving plane method on the spacetime $\mathbb{R}^{n+1} \times [0,\infty)$. The spacetime track
    \begin{align*}
        \mathcal{M} = \bigcup_{t \in [0,T)} \Sigma_t  \times \{t\}
    \end{align*}
    is a properly embedded hypersurface in $\mathbb{R}^{n+1} \times [0,T)$ asymptotic to $\mathcal{C} \times [0,T)$, in the sense that at each time slice $t$, $\Sigma_t$ is $C^{2,\alpha}$-asymptotic to $\mathcal{C}$ as we have demonstrated in \cref{p24}. Let
    \begin{align*}
        S = \{s \in [0,\infty) \mid (\mathcal{M}_s^+)^* > \mathcal{M}_s^-,  (\mathcal{M}_s^+)^t \text{ is graphical over $(\Pi_s)^t$ for $t \in [0,T]$} \}.
    \end{align*}
    Here by graphical we meant that $(\mathcal{M}_s^+)^t$ can be written as a normal graph over $(\Pi_s)^t$. Alternatively, since our solution is smooth we can require that the vertical vector $e_{n+1} = (0,\ldots,0,1)$ is not contained in the tangent space of any point $p \in (\mathcal{M}_s^+)^t$. We first note that since the cone is symmetric across $\Pi_0$ we have $(\mathcal{C}_s^+)^* > \mathcal{C}_s^-$ for every $s>0$. It is not hard to see that $S$ is an open set. In fact we just need to show that $e_{n+1}$ is not in the tangent space at infinity for $(\mathcal{M}_s^+)^t$. By \cref{p24},
    \begin{align*}
        \lim_{\rho \to 0^+} \rho (\mathcal{M}_s^+)^t = \mathcal{C}
    \end{align*}
    in $C^{2,\alpha}_{\loc}(\mathbb{R}^{n+1} \setminus \{0\})$, so eventually the tangent space at a point $p \in (\mathcal{M}_s^+)^t$ will lie close to the tangent space of $\mathcal{C}$. Since the cone is not vertical $e_{n+1}$ is clearly not contained in the tangent space of any point on $\mathcal{C}$, so by the convergence there is $\varepsilon > 0$ such that $e_{n+1}$ is not in the tangent space of any point $p \in (\mathcal{M}_{s-\varepsilon}^+)^t$.  \par
    
    \begin{figure}
    \centering
    \begin{minipage}{0.48\textwidth}
    \centering
    \resizebox{2in}{!}{\input{img1.tex}}
    \caption{Boundary touching}
    \end{minipage}\hfill
    \begin{minipage}{0.48\textwidth}
    \centering
    \resizebox{2in}{!}{\input{img2.tex}}
    \caption{Interior touching}
    \end{minipage}

    \end{figure}

    By \cref{l41}, there is $R > 0$ such that $\Sigma_1 \setminus B_R(0) = \mathcal{M}^1 \setminus B_R(0)$ is graphical over $(\Pi_0)^1$. Moreover this graphical scale scales parabolically, so for $s > T^2R$ we have $(\mathcal{M}_s^+)^t$ is graphical over $(\Pi_s)^t$ for each $t \in [0,T]$. It is also evident that $(\mathcal{M}_s^+)^t$ is asymptotic to the translated cone $\mathcal{C}+2se_{n+1}$, so when $s$ is large enough the reflected part is disjoint from $(\mathcal{M}_s^-)_0^+$ (that is, the part of $\mathcal{M}$ that lies below level $s$ and above $0$). Together with graphicality this implies $(\mathcal{M}_s^+)^* > \mathcal{M}_s^-$ for sufficiently large $s$, so $S$ is not empty. \par 
    Finally we show $S$ is closed. Suppose for a contradiction that $(s,\infty) \subset S$ (clearly $s \in S$ implies $[s,\infty) \subset S$) but $s \not \in S$. At level $s$, either the graphicality condition or the set comparison condition $(\mathcal{M}_s)^* > \mathcal{M}_s^-$ is violated. In the first case, by parabolic rescaling we may assume for simplicity that the nongraphicality happens first at time $t = 1$. This means that there is $p \in (\mathcal{M}_s^+)^1$ such that $e_{n+1} \in T_p(M_s^+)^1$. Thus tangent planes of $(\mathcal{M}_s^+)^*$ and $\mathcal{M}_s^-$ at the point $(p,1)$ must coincide. If we choose $r$ small enough we can ensure that $(\mathcal{M}_s^+)^*$ and $\mathcal{M}_s^-$ are graphical over $Z_r^-(p,1) \cap \{x_{n+1} \le s\}$. Since the tangent planes coincide we can apply Hopf Lemma \cref{l23} to $(\mathcal{M}_s^+)^*$ and $\mathcal{M}_s^-$ to conclude that these hypersurfaces agree on an open neighborhood of $(p,1)$. Moreover, the set $(\mathcal{M}_s^+)^* \cap \mathcal{M}_s^-$ is closed by definition and open by the maximum principle, so at least a connected component of $(\mathcal{M}_s^+)^*$ must coincide with a component of $\mathcal{M}_s^-$. This implies that $(\mathcal{M}_s^+)^*$ is asymptotic to both the cones $\mathcal{C} \times [0,\infty)$ and $(\mathcal{C}+2se_{n+1}) \times [0,\infty)$, a contradiction. In the second case, $s$ is necessarily the first level such that $(\mathcal{M}_s)^+ \cap \mathcal{M}_s^- \ne \emptyset$, and the graphicality condition implies that $(\mathcal{M}_s^+)^*$ and $\mathcal{M}_s^-$ must touch at an interior point $(p,t)$ of the flow. Again for $r$ small enough they are both graphical solutions of the MCF, so the maximum principle \cref{l22} implies that $(\mathcal{M}_s^+)^*$ and $\mathcal{M}_s^-$ agree on an open neighborhood of $(p,t)$. Since we can do this for any point $(p,t) \in (\mathcal{M}_s^+)^* \cap \mathcal{M}_s^-$, at least a connected component of $(\mathcal{M}_s^+)^*$ coincides with a component of $\mathcal{M}_s^-$, a contradiction. \par 
    We have thus proved that $S$ is open, non-empty and closed, and thus $S = (0,\infty)$ (note that since we have a strict inequality in our set up so we cannot conclude directly that $0 \in S$). Note that we can run a similar argument starting from the bottom half, yielding $(\mathcal{M}_s^-)^* > \mathcal{M}_s^+$ for any $s < 0$. Hence there must be a point of touching at $s = 0$. If the intersection is in the interior we can apply the maximum principle \cref{l22} to conclude that $(\mathcal{M}_0^+)^* = \mathcal{M}_0^-$, i.e. $\mathcal{M}$ is symmetric across the reflection with respect to $\Pi_0$. The same conclusion holds if the intersection is along the boundary by using the Hopf lemma \cref{l23} instead.
\end{proof}
\begin{rem}
	In the proof of \cref{t11} we actually proved the stronger result that reflection symmetry is preserved for self-expanders asymptotic to a double cone. This yields, for example, that when $m_1 = m_2$ in \cref{cone}, any self-expander asymptotic to $\mathcal{C}$ is symmetric across the reflection with respect to the plane $\{x_1 = 0\}$. We do not expect this to hold for cones whose links have more than 2 components. 
\end{rem}

\end{document}